\newcommand{\E}{\mathbb E}
\newcommand{\R}{\mathbb R}
\newcommand{\PP}{\mathbb P}
\newcommand{\sD}{\mathcal D}
\newcommand{\dd}{\partial}
\newcommand{\sL}{\mathcal L}
\newcommand{\sN}{\mathcal N}
\newtheorem{thm}{Theorem}[section]
\newtheorem{lem}[thm]{Lemma}
\newtheorem{prop}[thm]{Proposition}
\newtheorem{rem}[thm]{Remark}
\newtheorem{eg}[thm]{Example}
\begin{document}
\setcounter{footnote}{1}
\title{Fake exponential Brownian motion}

\date{%First version: October 21, 2008 \\
      October 4, 2012}

\author{David Hobson \\
University of Warwick, Coventry CV4 7AL, UK.}

\maketitle

\begin{abstract}
We construct a fake exponential Brownian motion, a continuous martingale different from 
classical exponential Brownian motion but with the same marginal distributions, thus 
extending results of Albin and Oleszkiewicz for fake Brownian motions. The ideas extend 
to other diffusions.

\end{abstract}

\section{The problem}
\label{sec-intro}

Brownian motion is a martingale with centred Gaussian marginals, and 
variance $t$. Hamza and Klebaner~\cite{HamzaKlebaner:06} asked, are 
there other real-valued martingale processes with the same univariate 
marginals as Brownian motion? Since Dupire~\cite{Dupire:97} has shown 
that Brownian motion is the unique 
martingale diffusion with $\sN(0,t)$ marginals, finding alternative 
martingales with the same marginals involves relaxing either the 
continuity assumption, or the Markov property.

Hamza and Klebaner~\cite{HamzaKlebaner:06} answered their question in 
the positive, but it turns out that Madan and Yor~\cite{MadanYor:02} had 
already provided a construction based on the Az\'{e}ma-Yor solution of 
the Skorokhod embedding problem. Both the Madan and 
Yor~\cite{MadanYor:02} and Hamza and Klebaner~\cite{HamzaKlebaner:06} 
constructions are discontinuous processes. However, 
Albin~\cite{Albin:07} produced an ingenious solution to the problem with 
continuous paths, based on products of Bessel process. Subsequently, 
Oleszkiewicz~\cite{Oleszkiewicz:08} produced a simpler construction 
which is extremely natural and is based on mixing and the Box-Jenkins 
simulation of a Gaussian variable. Oleszkiewicz also introduced the 
evocative term {\em fake Brownian motion} to describe a (continuous) 
martingale with the same univariate marginal distributions as Brownian 
motion.

Now that there has been serious study of fake Brownian motion it seems 
reasonable to ask if there exist fake versions of other canonical martingales. In a 
related context, Hirsch et 
al~\cite{HirschProfetaRoynetteYor:11} give several general methods of constructing 
processes with given marginals, but none is quite 
suited to the problem we consider here.
In particular, in this article we provide a family of fake exponential 
Brownian motions: we give a martingale stochastic process with 
continuous paths and with lognormal marginals matching those of exponential 
Brownian motion, which is not exponential Brownian motion. The ideas 
apply to other time-homogeneous diffusions.

One reason for interest in fake processes in this sense comes from mathematical finance, 
see Hobson~\cite{Hobson:11}. In finance knowledge of a continuum of option prices of a 
fixed maturity (one for each possible strike) is equivalent to knowledge of the marginal 
distribution of the underlying financial asset under the pricing measure. This result 
dates back to Breeden and Litzenberger~\cite{BreedenLitzenberger:78}. Knowledge of the 
prices of options for the continuum of strikes and maturities is equivalent to knowledge 
of all the marginals of the underlying financial asset, but implies nothing about the 
finite-dimensional distributions. (However, general theory tells us that the asset 
price, suitably discounted, must be a martingale, again under the pricing measure.) In 
finance we often have knowledge of option prices, but we have little reason to believe 
in any particular model. The canonical model in finance is the 
Osborne-Samuelson-Black-Scholes exponential Brownian motion model, under which vanilla 
option prices are given by the Black-Scholes formula. The fact that we can provide 
alternative martingale models with the same marginals, and therefore which support the 
same option prices, shows that there is no unique model which is consistent with those 
prices.
 
Let $\delta_x$ denote the unit mass at $x$. Suppose 
$P=(P_t)_{t \geq 0}$ is exponential Brownian motion, scaled such that 
$P_0=1$, and with drift parameter chosen such that $P$ is a martingale. 
Setting volatility equal to one we may assume that $P$ solves $dP_t/P_t 
= dB_t$. Let $(\nu_t)_{t \geq 0}$ be the family of laws of $(P_{t})_{t 
\geq 0}$ with lognormal densities $(p_t(\cdot))_{t > 0}$ where
\begin{equation}
\label{eqn:ebmdensity} p_t(x) = \frac{1}{\sqrt{2 \pi}}
\frac{e^{-t/8}}{t^{1/2}}
\frac{e^{-(\ln x)^2/2t}}{x^{3/2}} . 
\end{equation}
We write $\nu_t \sim {\mathcal L}(P_t) \sim p_t(\cdot)$. Following Hamza 
and Klebaner we ask {\em does there exist a continuous martingale 
with marginals $(\nu_t)_{t \geq 0}$, which is distinct from 
exponential Brownian motion?} The main goal of this note is to show that 
the answer to this question is positive.

For the general diffusion case, let $X=(X_t)_{t \geq 0}$ be a time-homogeneous 
martingale diffusion so that $X$ solves $dX_t = \sigma(X_t)dB_t$ subject to $X_0 =x_0$. 
Let $I
\subseteq \R$ be the state space of $X$ and let 
$\mu^X_t$, with associated density $f_t(\cdot)$, be the law of $X_t$. 
Assume that the boundary points of $I$ are not 
attainable so that $I = (\ell, r)$ where $-\infty \leq l < x_0 < r \leq \infty$ and that 
$\sigma$ is positive and continuously differentiable on the interior of $I$. Further, we 
suppose 
that 
$f_t(x)$ is continuously differentiable in both $x$ and $t$. This is automatically 
satisfied for Brownian motion and exponential Brownian motion and by results of 
Rogers~\cite{Rogers:85} is also satisfied if 
$\sigma$ is thrice continuously differentiable and $\int_\ell 
\frac{1}{\sigma(x)} dx = \infty = \int^r \frac{1}{\sigma(x)} dx$.

Our goal is to describe a fake version of $X$, where a fake version of 
the martingale $X$ is a continuous martingale $\tilde{X} = 
(\tilde{X}_t)_{t \geq 0}$ such that ${\mathcal L}(\tilde{X}_t) \sim 
\mu^X_t$, which is distinct from $X$.

Using Jensen's inequality and the martingale property of $X$ we can 
deduce that the family $(\mu^X_t)_{t \geq 0}$ must be increasing in 
convex order. Further, the results of Kellerer~\cite{Kellerer:72} imply 
that the fact that a family of measures is increasing in convex order is 
necessary and sufficient for there to be a martingale with those 
marginals. Of course, such a process need not be a diffusion. For a 
time-homogeneous diffusion we have the following result, which follows from Tanaka's 
formula (Revuz and
Yor~\cite[Theorem 6.1.2]{RevuzYor:99}), the fact that $\int_0^t I_{
\{ X_s \geq x \} }dX_s$ is a martingale, and an application of Fubini's
Theorem.

\begin{lem}[Carr and Jarrow~\cite{CarrJarrow:87}, Klebaner~\cite{Klebaner:02}]
\label{lem:mgdiff}
Suppose $\sigma(.)$ is continuous. 
Then
\[ \E[(X_T - x)^+] = (x_0-x)^+ + \frac{\sigma(x)^2}{2} \int_0^T f_t(x) 
dt \]
\end{lem}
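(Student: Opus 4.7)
The plan is to follow the three ingredients flagged in the statement, in the order: Tanaka's formula, the martingale property of the stochastic integral, and Fubini combined with the occupation-times formula.

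First I would apply Tanaka's formula to the convex function $x \mapsto (x-K)^+$ with $K=x$, which for the continuous semimartingale $X$ gives
\[ (X_T - x)^+ = (X_0 - x)^+ + \int_0^T I_{\{X_s > x\}} \, dX_s + \frac{1}{2} L_T^x, \]
where $L_T^x$ is the semimartingale local time of $X$ at level $x$ up to time $T$. Since $X_0 = x_0$, the leading deterministic term is $(x_0 - x)^+$.

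Next I would take expectations. The excerpt explicitly permits us to use that $\int_0^t I_{\{X_s \geq x\}} dX_s$ is a true martingale, so its expectation at time $T$ is zero (the boundary set $\{X_s = x\}$ has zero Lebesgue time a.s.\ for a nondegenerate diffusion, so the $\geq$ vs.\ $>$ distinction is immaterial). This reduces the identity to
\[ \E[(X_T - x)^+] = (x_0 - x)^+ + \tfrac{1}{2}\, \E[L_T^x]. \]

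Finally I would compute $\E[L_T^x]$ via the occupation-times formula. Since $d\langle X\rangle_s = \sigma(X_s)^2\, ds$, for any nonnegative Borel $\phi$ we have $\int_0^T \phi(X_s) \sigma(X_s)^2 \, ds = \int_\R \phi(y)\, L_T^y\, dy$. Taking expectations and applying Fubini on the left-hand side gives
\[ \int_\R \phi(y)\, \sigma(y)^2 \int_0^T f_t(y)\, dt\, dy = \int_\R \phi(y)\, \E[L_T^y]\, dy, \]
so $\E[L_T^y] = \sigma(y)^2 \int_0^T f_t(y)\, dt$ for almost every $y$, and by the assumed continuity of $\sigma$ and of $f_t(\cdot)$ this equality extends to every $y$ in the interior of $I$. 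Substituting at $y=x$ yields the claimed formula.

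The only real obstacle is justifying the true (not just local) martingale property of the stochastic integral and interchanging the expectation with the time integral; the excerpt tells us to take the former for granted, and the latter follows from Tonelli applied to the nonnegative integrand $\sigma(X_s)^2 \phi(X_s)$ together with the continuity/integrability of $f_t$ against $\sigma^2$ on compact $y$-intervals. Everything else is routine bookkeeping.
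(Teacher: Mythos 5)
Your proof is correct and follows essentially the same route the paper sketches: Tanaka's formula, the vanishing expectation of the stochastic integral, and Fubini combined with the occupation-times formula to evaluate $\E[L_T^x]$. The paper only cites Carr--Jarrow and Klebaner and gives a one-line outline, but your filling-in of the local-time computation is exactly the intended argument.
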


%Note that the result of the Lemma can be extended to time-inhomogeneous diffusions
%and beyond, see Klebaner~\cite{Klebaner:02}, and used to form a basis of 
%a proof of Dupire's formula. 

\section{Fake solutions based on Skorokhod embeddings}
Given a stochastic process $X$ on a state space $E$, and a probability law $\mu$ on $E$, 
the Skorokhod embedding problem~\cite{Skorokhod:65} is to find a stopping time $\tau$ 
such that $X_\tau \sim \mu$. In the simplest Brownian case, the problem becomes, given 
Brownian motion null at zero and a centred, square-integrable probability measure $\mu$ 
on $\R$, to find a stopping time $\tau$ such that $B_\tau \sim \mu$ and $\E[\tau]$ is 
finite (and then necessarily $\E[\tau]= \int x^2 \mu(dx)$).

There are a multiplicity of solutions of the Skorokhod embedding problem 
for Brownian motion. Many of these solutions are based on considering 
the bivariate process $(B_t,A_t)_{t \geq 0}$ where $A$ is some 
increasing additive functional, null at 0. Then the solution of the 
embedding problem is to take $\tau \equiv \tau^A_\mu = \inf\{t>0 : (B_t, 
A_t) \in \sD^A_\mu \}$ where $\sD_\mu$ is some appropriate domain in $\R 
\times \R_+$. In particular the solutions of \{Root~\cite{Root:69}, 
Rost~\cite{Rost:76}, Az\'{e}ma-Yor~\cite{AzemaYor:79a}, 
Vallois~\cite{Vallois:83, Vallois:92}\} are based on the choices $A_\cdot$ 
equals \{time, time, the maximum process, the local time\} 
respectively.

These constructions are suggestive of a direct construction of a 
discontinuous fake martingale diffusion $\tilde{X}= (\tilde{X}_t)_{t \geq 0}$. 
Fix an additive functional $A$ and let $\sD_t \equiv 
\sD^A_{\mu^X_t}$ denote the domain such that $\tau_t \equiv 
\tau^A_{\mu^X_t}$ is a solution of the Skorokhod embedding problem for 
$\mu^X_t$ in Brownian motion. Then provided the stopping times $\tau_t$ 
are non-decreasing in $t$, or equivalently provided the stopping domains 
$\sD_t$ are decreasing in $t$, the process $\tilde{X}$ given by 
$\tilde{X}_t = B_{\tau_t}$ is a martingale with the required 
distributions. All that remains is to check that the regions $\sD_t$ are 
indeed decreasing in $t$.

In general, the fact that a family of distributions $(\mu^X_t)_{t \geq 0}$ 
is increasing in convex order is not sufficient to guarantee that 
the associated domains $\sD_t$ are decreasing in $t$, and for any 
specific additive functional and associated construction of the solution 
to the Skorokhod embedding problem this needs to be checked.

Madan and Yor~\cite{MadanYor:02} use this approach and the Az\'ema-Yor 
solution to describe a discontinuous fake diffusion. The condition that 
$\sD_t$ is decreasing in $t$ is restated as the fact that the marginal 
distributions are increasing in residual mean life order  
(equivalently, the barycentre functions are increasing in $t$). It can 
be checked that this property holds for the lognormal family of 
distributions so that the Madan and Yor~\cite{MadanYor:02} construction 
gives a fake exponential Brownian motion. Similarly, the Root, R\"{o}st, 
and Vallois constructions all extend from the univariate case for a 
single marginal, to give fake exponential Brownian motions. However, in 
all cases the resulting process is discontinuous.

\section{Continuous Fake exponential Brownian motion} 

Let $X = (X_t)_{t \geq 0}$ be the solution of an It\^{o} stochastic 
differential equation, and let $\mu^X_t \sim \sL(X_t)$. Then 
Gy\"{o}ngy~\cite{Gyongy:86} showed that there is a diffusion process $Y= 
(Y_t)_{t \geq 0}$ such that $\sL(Y_t) \sim \mu^X_t$. In a related 
result, Dupire~\cite{Dupire:97} showed that if the family of 
distributions $(\mu_t)_{t \geq 0}$ is increasing in convex order, and 
if the associated call price functional $C(t,x) = \int_{\R}(y-x) 
\mu_t(dy)$ is sufficiently regular (in particular, $C$ is $C^{1,2}$ and 
has certain limiting properties for large $x$) then there is a 
time-inhomogeneous martingale diffusion $Y$ solving $dY_t = \eta(t,Y_t) dW_t$ such 
that $\sL(Y_t) \sim \mu_t$. The coefficient $\eta$ is given by
%\begin{equation}
%\label{eqn:dupire} 
\( \eta(t,y)^2 = 2\frac{\partial C(t,y)}{\partial t} / % \left/ 
\frac{\partial^2 C(t,y)}{\partial^2 y} %\right. 
. \)
%\end{equation} 
We call the process $Y$ associated with the family $(\mu_t)_{t \geq 0}$ 
the Dupire diffusion. 

%Note that further conditions on $\eta$ are needed 
%to ensure that $Y$ is a martingale.

Given a process whose univariate marginals coincide with those of 
exponential Brownian motion, both Gy\"ongy and Dupire show how to 
construct a diffusion process whose marginals are lognormal. However, in 
both cases the resulting process is exponential Brownian motion itself.
Instead, in this note we give a family of continuous martingales 
which are not exponential Brownian motion but which share the 
lognormal univariate marginal distributions of exponential Brownian 
motion. The process we construct is a mixture of two well-chosen 
martingales.

Let $X=(X_t)_{t \geq 0}$ be some time-homogeneous martingale diffusion 
such that $X_0=x_0$ and for $t>0$, $\sL(X_t) \sim f_t(\cdot)$. 
The idea is to try to write $f_t(x) = c g_t(x) + (1-c)h_t(x)$ 
for a constant $c \in (0,1)$ and a pair of families 
$\{(g_t)_{t > 0}, (h_t)_{t > 0}\}$ such that for each $t$, $g_t$ 
and $h_t$ are the densities of random variables with mean $x_0$, and such that 
if $\mu^G_t$ (respectively $\mu^H_t$) is the law of a random variable 
with density $g_t$ (respectively $h_t$), and if $\mu^G_0 = \delta_{x_0} = \mu^H_0$, 
then 
the family $(\mu^G_t)_{t 
\geq 0}$ (respectively $(\mu^H_t)_{t \geq 0}$) is increasing in convex 
order.  Then, if $G=(G_t)_{t \geq 0}$ (respectively $H=(H_t)_{t \geq 0}$) 
is the Dupire diffusion  associated with
the family of laws $(\mu^G_t)_{t \geq 0}$ (respectively 
$(\mu^H_t)_{t \geq 0}$) and if $\tilde{X} = (\tilde{X})_{t \geq 0}$ is defined by
\( \tilde{X}_t = G_t I_{ \{ Z^c = 1 \} } + H_t I_{ \{ Z^c=0 \} } \)
then $\tilde{X}$ is a fake version of $X$. Here $Z^c$ is a Bernoulli 
random variable which is independent of $G$ and $H$, is known at time 0, 
and is such that $\PP(Z^c=1)=c$, where $c \in (0,1)$.

It remains to show that we can construct the families $\{(g_t)_{t > 0},(h_t)_{t 
> 0} \}$ 
and that the resulting process is not $X$ itself. For this we need 
the family $(\mu^G_t)_{t \geq 0}$ to be increasing (in convex order), 
but not too 
quickly as that will mean there is no `room' for the family 
$(\mu^H_t)_{t \geq 0}$ 
to be increasing. 

The final simplifying idea is to suppose that $G$ is a time change of 
$X$, $G_t = X_{a(t)}$. By choosing $a$ to be an increasing, 
differentiable process we automatically get the existence 
of $G$. If further we require that $\dot{a}<1$ we can hope that the 
family $(\mu^H_t)_{t \geq 0}$ is also increasing in convex order.  

\begin{prop}
\label{prop:convexorder}
Suppose $a(t)$ is a strictly increasing, twice continuously differentiable 
function, null at 
0, with
$\dot{a}(t) < 1$ for $t>0$, and define 
\[ K = \inf_{t > 0} \inf_{y \in I} 
\frac{f_t(y)}{f_{a(t)}(y)}. \]
Suppose $K>0$. Fix $c \in (0,K)$ %and set $h_0(x)= \delta_{x_0}$ 
and for $t > 0$ set 
\[ h_t(y)= \frac{1}{1-c} \left\{ f_t(y) - cf_{a(t)}(y) \right\}. 
\]
Then $(h_t)_{t > 0}$ is a family of densities which is 
increasing in convex order.

Moreover, there exists a martingale diffusion $H$ such that $\sL(H_t) \sim h_t(\cdot)$.
\end{prop}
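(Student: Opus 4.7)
My plan is to verify in order that each $h_t$ is a probability density with mean $x_0$, that the family $(\mu^H_t)_{t \geq 0}$ is increasing in convex order, and finally to obtain the diffusion $H$ by applying Dupire's construction recalled above.

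Non-negativity is immediate from the definition of $K$: since $c < K \leq f_t(y)/f_{a(t)}(y)$ on $I$, one has $f_t(y) - cf_{a(t)}(y) \geq (K-c)f_{a(t)}(y) \geq 0$. Linearity together with $\int_I f_t = \int_I f_{a(t)} = 1$ gives $\int_I h_t = 1$, and the martingale property of $X$ gives $\int_I y f_t(y)\, dy = x_0 = \int_I y f_{a(t)}(y)\, dy$, whence $\int_I y h_t(y)\, dy = x_0$. In particular, since $\delta_{x_0}$ is dominated in convex order by every probability measure with mean $x_0$, the convex order comparison at $s=0$ is automatic.

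For the convex order comparison between positive times I would use the standard fact that two probability measures on $\R$ with a common mean are ordered in convex order precisely when their call price functions are pointwise ordered. Writing $C^X(t,x) = \E[(X_t-x)^+]$ and
\[ C^H(T,x) := \int_I (y-x)^+ h_T(y)\, dy = \frac{1}{1-c}\bigl[C^X(T,x) - c\, C^X(a(T),x)\bigr], \]
it suffices to show that $T \mapsto C^H(T,x)$ is non-decreasing for each fixed $x \in I$. Lemma \ref{lem:mgdiff} yields $\dd_t C^X(t,x) = \frac{1}{2}\sigma(x)^2 f_t(x)$, so by the chain rule
\[ \frac{\dd}{\dd T} C^H(T,x) = \frac{\sigma(x)^2}{2(1-c)}\bigl[f_T(x) - c\,\dot{a}(T)\, f_{a(T)}(x)\bigr]. \]
The role of the hypothesis $\dot{a}<1$ is now plain: combined with $c<K$ it gives $c\dot{a}(T) f_{a(T)}(x) < c f_{a(T)}(x) \leq f_T(x)$, so the bracket is non-negative and the required monotonicity follows.

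For existence of $H$, I would invoke Dupire's theorem. Breeden--Litzenberger gives $\dd^2_y C^H(T,y) = h_T(y)$, so the candidate local volatility is
\[ \eta(T,y)^2 = \frac{2\,\dd_T C^H(T,y)}{\dd^2_y C^H(T,y)} = \sigma(y)^2\, \frac{f_T(y) - c\,\dot{a}(T)\, f_{a(T)}(y)}{f_T(y) - c f_{a(T)}(y)} \geq \sigma(y)^2, \]
with the final bound again a consequence of $\dot{a}<1$; thus $\eta$ inherits non-degeneracy from $\sigma$ and, by the smoothness hypotheses on $f_t$ and $a$, is continuous on $I \times (0,\infty)$. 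The main technical obstacle I anticipate is verifying the regularity prerequisites of Dupire's theorem (the $C^{1,2}$ smoothness of $C^H$ and its boundary behaviour at the endpoints of $I$), which transfer from the corresponding properties of $C^X$ by linearity in $(C^X(T,\cdot), C^X(a(T),\cdot))$ using the assumed joint $C^1$ regularity of $f_t$ and the $C^2$ regularity of $a$; once this is in hand, Dupire's construction supplies a martingale diffusion $H$ with $\sL(H_t) \sim h_t$.
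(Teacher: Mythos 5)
Your treatment of the first half --- that $h_t$ is a density with mean $x_0$ and that the family is increasing in convex order via Lemma~\ref{lem:mgdiff} and the bound $c\dot{a}(T)f_{a(T)}(x) < cf_{a(T)}(x) \le f_T(x)$ --- matches the paper's argument essentially line for line. The gap is in the last paragraph, where you defer the real work to ``invoking Dupire's theorem.''

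There are two issues the paper's proof handles and yours does not. First, even granting that $dH_t = \eta(t,H_t)\,dB_t$ has a weak solution, this only makes $H$ a \emph{local} martingale; strict local martingales are a genuine danger here (this is precisely the ``bubbles'' phenomenon that Ekstr\"om--Tysk study). The bound you record, $\eta^2 \ge \sigma^2$, points the wrong way: what you need is the \emph{upper} bound $\eta(t,y)^2 \le L^2\sigma(y)^2$ with $L^2 = K/(K-c)$ (which follows since $\eta^2/\sigma^2 < f_t/(f_t - cf_{a(t)}) \le K/(K-c)$), so that $H$ can be dominated by the time-changed martingale $X^L_t = X_{L^2t}$. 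The paper then applies Hajek's comparison theorem to produce a time-change $\Gamma_t \le t$ with $X^L_{\Gamma_t}$ solving the same SDE, and uniqueness in law gives that $H$ is a true martingale. Second, even once $H$ is a martingale diffusion, it is not automatic that $\sL(H_t) \sim h_t$: one must show that the call-price function $C_H(t,y) = \E[(H_t-y)^+]$ coincides with the candidate $C_h(t,y) = \int (z-y)^+ h_t(z)\,dz$. Both solve Dupire's forward PDE with the same data, and the identification rests on a \emph{uniqueness} theorem for that PDE (the paper cites Ekstr\"om--Tysk, with separate treatment of the half-line and whole-line cases). Your remark that the ``main obstacle'' is transferring $C^{1,2}$ regularity by linearity misses that the crux is uniqueness of the forward equation, not smoothness of $C^H$; without it the argument that $H$ has the prescribed marginals is circular.
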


\begin{proof}
From the definition of $c$ we have that $h_t$ is non-negative and it is
immediate that $h_t(y)$ integrates to one so that $h_t$ is the density
of a continuous random variable. Further, since $f_t$ corresponds to a
mean $x_0$ random variable, so does $h_t$.

Let $(\tilde{H}_t)_{t > 0}$ be a family of random variables such that $\sL(\tilde{H}_t) 
\sim h_t( \cdot)$. %Let $\tilde{H}_0 = x_0$.
By Lemma~\ref{lem:mgdiff} and the hypotheses of the proposition,
\begin{eqnarray}
\frac{\dd}{\dd t} \E[(\tilde{H}_t - x)^+] & = & 
\frac{1}{1-c} \frac{\dd}{\dd t} \E \left[(X_t - x)^+ - c(X_{a(t)} - x)^+ \right]
\label{eqn:Cdot}
\\
& = & \frac{1}{1-c} \sigma(x)^2 \left( f_t(x) - c \dot{a}(t) f_{a(t)}(x) \right)
> 0 . \nonumber 
\end{eqnarray}  
Hence the family 
$(h_t)_{t > 0}$ is a family of densities which is
increasing in convex order.

Define
\begin{equation}   
\label{eqn:etadef}
\eta(t,y)^2 = \sigma(y)^2 \frac{ f_t(y) - c \dot{a}(t)
f_{a(t)}(y)}{f_t(y) - c
f_{a(t)}(y) } .
\end{equation}
and let $H$ be a weak solution of $dH_t = \eta(t,H_t) dB_t$ subject to $H_0=x_0$. By our 
assumptions the 
solution to this SDE is unique in law. Moreover we have
\[ 1 \leq \frac{\eta(t,y)^2}{\sigma(y)^2}
< \frac{f_t(y)}{f_t(y) - cf_{a(t)}(y) } \leq L^2 \]
where $L^2 = K/(K-c)$.
Further, $X^L$ given by $X^L_t = X_{L^2 t}$ is a
martingale and
solves $dX^L_{t} =
\sigma_L(X^L_{t}) dB_t$ where $\sigma_L(x) = L \sigma(x)$.
Then, $\eta(t,y) \leq \sigma_L(y)$ and by Theorem~3 of Hajek~\cite{Hajek:85}, we can
find an increasing time-change $\Gamma$ with $\Gamma_t \leq t$ such that
$\hat{H}_t = X^L_{\Gamma_t}$ solves $d\hat{H}_t = \eta(t,\hat{H}_t) dB_t$. 
Hence, by the uniqueness in law of solutions to this equation, $H$ is a martingale.

Suppose that $X$ is non-negative (the case where $X$ is bounded above or below reduces 
to this case after a reflection and/or a shift.)
Define $C_h:[0,\infty) \times [0,\infty) \mapsto \R$ via $C_h(0,y)=(x_0-y)^+$ and for 
$t > 0$, 
$C_h(t,y) = \int (z-y)^+ h_t(z) dz$. 
Then, using (\ref{eqn:Cdot}) we deduce that $C_h$ solves Dupire's equation
\begin{equation}
\label{eqn:dupire}
 \frac{1}{2} \eta(t,y)^2  C''(t,y) = \dot{C}(t,y) 
\end{equation}
subject to
\begin{equation}  
\label{eqn:bcR+}
 \mbox{$C(0,y) = (x_0 - y)^+$, $C(0,t) = x_0$, $0 \leq C \leq x_0$.}
\end{equation} 
Indeed, by an argument similar to that in 
Ekstr\"{o}m and
Tysk~\cite[Step 5 of the proof of Theorem 2.2]{EkstromTysk:12} $C_h$ is the unique 
solution of (\ref{eqn:dupire}) satisfying (\ref{eqn:bcR+}). 

However, by the results of Dupire~\cite{Dupire:97} and 
Klebaner~\cite{Klebaner:02} $C_H(t,y) = 
\E[(H_t-y)^+]$ also solves (\ref{eqn:dupire}) and (\ref{eqn:bcR+}) and 
hence, $\sL(H_t) \sim h_t(\cdot)$.

Now consider the case where $X$ the range of $X$ is the whole real line. Let $\eta$ and 
$H$ be as before. Fix $\overline{T} \in (0,\infty)$.
Let $C_h: [0,\overline{T}] \times \R \mapsto \R$ be the 
solution of (\ref{eqn:dupire}) subject to
\begin{equation}
\label{eqn:bcR}  
 \mbox{$C(0,y) = (x_0 - y)^+$, $\sup_{t \leq \overline{T}} \lim_{y \uparrow \pm \infty}  
C_t(t,y) - (x_0-y)^+ = 0$, $(x_0-y)^+ \leq C_h(t,y) \leq (x_0 - y)^+ + J$, }
\end{equation}
where $J = J(\overline{T}) = \frac{\E[(X_{L^2 \overline{T}} - x_0)^+]}{1-c}$.
By a small
modification of the arguments of Ekstr\"{o}m and
Tysk~\cite{EkstromTysk:12} to allow for the fact that we are working with real-valued 
processes, it follows that the solution to this equation is unique.
But $C_H(t,y) =
\E[(H_t-y)^+]$ also solves (\ref{eqn:dupire}) and $(x_0 - y)^+ \leq C_H(t,y) \leq 
\E[(X_{L^2 t} - y)^+]$ and hence $\sL(H_t) \sim h_t$ for $t \leq \overline{T}$. Since 
$\overline{T}$ is arbitrary, the result follows for all $t$.
\end{proof}

\begin{thm}
Let $G$ be given by $G_t = X_{a(t)}$ 
and let $H$ 
be the Dupire diffusion associated with the 
family of densities $(h_t)_{t > 0}$. Then
\[ \tilde{X}_t = G_t I_{ \{ Z^c = 1 \} } + H_t I_{ \{ Z^c=0 \} } \]
is a fake version of $X$.
\end{thm}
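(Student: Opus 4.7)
Four properties of $\tilde X$ must be verified: continuous paths, the martingale property, one-dimensional marginals $\mu_t^X$, and distinctness from $X$. Given Proposition~\ref{prop:convexorder}, the first three are essentially bookkeeping; the substance lies in the fourth.

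I would handle the continuity and marginal steps together. Since $Z^c$ is resolved at time $0$, on each of the two events $\{Z^c=0\}$ and $\{Z^c=1\}$ the process $\tilde X$ coincides throughout $[0,\infty)$ with one of $G$ or $H$, and both have continuous paths ($G$ as a deterministic time-change of $X$, $H$ as the diffusion furnished by Proposition~\ref{prop:convexorder}). Conditioning on the Bernoulli $Z^c$, which is independent of $G$ and $H$, the density of $\tilde X_t$ at $y$ equals $c f_{a(t)}(y) + (1-c) h_t(y)$, and substituting the definition of $h_t$ from Proposition~\ref{prop:convexorder} collapses this to $f_t(y)$.

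For the martingale property I would work in the enlarged filtration $\mathcal{F}_t = \sigma(Z^c) \vee \sigma(G_s, H_s : s \leq t)$, with $G$, $H$, $Z^c$ taken mutually independent as the construction permits. Here $G$ is a martingale (deterministic increasing time-change of the martingale $X$) and $H$ is a martingale by Proposition~\ref{prop:convexorder}; independence yields $\E[G_t \mid \mathcal{F}_s] = G_s$ and $\E[H_t \mid \mathcal{F}_s] = H_s$, and multiplying by the $\mathcal{F}_0$-measurable indicators $I_{\{Z^c=1\}}$ and $I_{\{Z^c=0\}}$ gives $\E[\tilde X_t \mid \mathcal{F}_s] = \tilde X_s$.

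The substantive step, and the main obstacle, is to show $\tilde X$ is not equal in law to $X$. The plan is to separate them by quadratic variation. Because the two indicators are disjoint,
\[ \langle \tilde X \rangle_t = I_{\{Z^c=1\}} \langle G \rangle_t + I_{\{Z^c=0\}} \langle H \rangle_t, \]
and on $\{Z^c=1\}$ the deterministic time-change formula gives $\langle G \rangle_t = \int_0^t \dot a(s) \sigma(G_s)^2 \, ds$, which is strictly less than $\int_0^t \sigma(\tilde X_s)^2 \, ds$ almost surely since $\dot a < 1$ and $\sigma > 0$. If $\tilde X$ had the same law as $X$ on path space, then since quadratic variation is a measurable path functional and $\langle X \rangle_t = \int_0^t \sigma(X_s)^2 \, ds$ almost surely, we would be forced to have $\langle \tilde X \rangle_t = \int_0^t \sigma(\tilde X_s)^2 \, ds$ almost surely; this contradicts the strict inequality on the positive-probability event $\{Z^c = 1\}$, so $\tilde X$ is a genuine fake.
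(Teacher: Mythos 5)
Your proposal is correct and follows essentially the same approach as the paper's: compute the marginal density via the mixture decomposition, note that the martingale property passes from $G$ and $H$ to $\tilde X$, and distinguish $\tilde X$ from $X$ by quadratic variation. The paper's proof is very terse (it simply asserts these three facts), and you have usefully filled in the implicit details — in particular the enlarged filtration $\sigma(Z^c)\vee\sigma(G_s,H_s:s\le t)$ for the martingale step and the explicit computation $\langle G\rangle_t=\int_0^t\dot a(s)\sigma(G_s)^2\,ds<\int_0^t\sigma(G_s)^2\,ds$ making precise what "the quadratic variations are different" means.
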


\begin{proof}
If $\tilde{f}_t(x)$ denotes the time-$t$ density of $\tilde{X}$ we have that
\[ \tilde{f}_t(x) = c g_{t}(x) + (1-c) h_t(x) =c f_{a(t)}(x) + (1-c) h_t(x) = f_t(x). 
\]
and $X_t$ has the same distribution as $X$. 

Since $G$ and $H$ are martingales, so is $\tilde{X}$.
But the quadratic variations of $X$ and $\tilde{X}$ are different and hence
$\tilde{X}$ is a fake version of $X$.
\end{proof}

\begin{eg}[Fake Brownian motion]
In this case 
\[ \frac{f_t(y)}{f_{a(t)}(y)} = \sqrt{ \frac{a(t)}{t} } e^{y^2/2 [ 
1/a(t)-1/t]} \geq \sqrt{ \frac{a(t)}{t} }. \]
Fix $c \in (0,1)$ and choose $a(t)/t > c^2$ with $\dot{a}(t) \leq 1$, 
for 
example ${a}(t)=K^2 t$ for some $K \in (c,1)$.
\end{eg}

\begin{eg}[Fake exponential Brownian motion]
Recall the density given in (\ref{eqn:ebmdensity}). Then
\[ \min_{x>0} \frac{f_t(x)}{f_{a(t)}(x)} = 
\min_{x>0} \sqrt{ \frac{a(t)}{t} } e^{[a(t)-t]/8} e^{(\ln x)^2[ 
1/a(t)-1/t]/2} = \sqrt{ \frac{a(t)}{t} } e^{[a(t)-t]/8} = 
\frac{\psi(a(t))}{\psi(t)} \]
where $\psi$ is the increasing function $\psi(t) = \sqrt{t}e^{t/8}$. 
Fix $K \in (0,1)$ 
and set $a(t)= \psi^{-1}(K \psi(t))$. Then $a(t)<t$ and
\( \dot{a}(t) = \phi(t)/\phi(a(t)) \) 
where $\phi$ is the decreasing function $\phi(t) = \frac{t+4}{8t}$,
so that $\dot{a}<1$ for $t>0$.
\end{eg}

\begin{rem}
The argument can be extended to cover the case where the original 
process $X$ is a time-inhomogeneous martingale diffusion $dX_t 
= \sigma(t,X_t) dB_t$ provided $k>0$ where
\[ k := \inf_{t > 0} \inf_{y \in I} \left\{ \frac{f_t(y)}{f_{a(t)}(y)} \wedge 
\frac{f_t(y) \sigma(t,y)^2}{f_{a(t)}(y) \sigma(a(t),y)^2} \right\} .
\]
\end{rem}

%\vspace{10mm}

%\hfil DGH \today

\end{document}